\newtheorem{thm}{Theorem}[section]
\newtheorem{lem}[thm]{Lemma}
\theoremstyle{definition}
\newtheorem{defn}[thm]{Definition}
\theoremstyle{remark} \numberwithin{equation}{section}
\newcommand{\R}{\mathbb{R}}
\newcommand{\N}{\mathbb{N}}
\newcommand{\mathsym}[1]{{}}
\begin{document}
%\doublespace
%\setlength{\baselineskip}{17pt} (for double line space)
%\renewcommand{\thefootnote} {\fnsymbol{footnote}}
\setcounter{page}{1}

\title{\textbf{On the existence-uniqueness and exponential estimate for solutions to stochastic functional differential
equations driven by G-L\'evy process}}
\author{\textbf{Faiz Faizullah$^*$, Muhammad Farooq$^1$, MA Rana$^1$, Rahman Ullah$^2$  } \textbf{}
 \vspace{0.1cm}\\
{$^*$  Department of BS\&H, College of E\&ME, National University of Sciences}\\{ and Technology (NUST),
Pakistan}\\
{$^1$ Department of Mathematics and Statistics, Riphah International
}\\ {University Islamabad, Pakistan}\\
{$^2$ Department of Mahematics, Women University Swabi,
Pakistan}
}
\maketitle
\begin{abstract}
The existence-uniqueness theory for solutions to stochastic dynamic systems
is always a significant theme and has received a huge attention. The objective of
this article is to study the mentioned theory for stochastic functional differential
equations (SFDEs) driven by G-L\'evy process. The existence-uniqueness theorem for solutions to SFDEs driven by G-L\'evy process has been determined.
The error estimation between
the exact solution and Picard approximate solutions has been shown. In addition,
the exponential estimate has been derived.
\end{abstract}
\section{Introduction}
Stochastic dynamic equations based on G-Brownian motion have
been studied by several authors \cite{bl,fb,g,ly1,zxk}. Among them, the existence-uniqueness, stability, moment estimates, continuity and differentiability  properties of solution with respect to the initial data were studied in detail \cite{f2019, f3,ly2, p, rbs,rf,zlz}. Stochastic differential equations based on L\'evy
process perform a leading role in a broad range of applications, containing financial mathematics for describing the observed reality of financial markets \cite{ct}, physics for various phenomenons \cite{w}, genetics for the movement designs of many animals \cite{fe} and biology for modeling the spread of diseases \cite{jowh}. In \cite{hp1}  Hu and  Peng initiated the G-L\'evy process. In \cite{r} Ren represented a sublinear expectation related to the framework of G-L\'evy process as an upper-expectation.
Paczka then inaugurated the integrals and the It\^o formula based on the G-L\'evy process \cite{pa}. The existence and exponential estimates for solutions to stochastic differential equations (SDEs) driven by G-L\'evy process were established by Wang and Gao \cite{wg}. They also constructed the BDG-type inequality in the stated framework \cite{wg}. The existence theory for solutions to SDEs based on G-L\'evy process having discontinuous coefficients was given by  Wang and  Yuan \cite{wy}. The quasi-sure exponential
stability of SDEs in the framework of G-L\'evy process was initiated by Shen et. al. \cite{swy}. To the best of our knowledge no text is available on the study of existence-uniqueness and exponential estimates for solutions to stochastic functional differential equations (SFDEs) driven by G-L\'evy process. Consequently, the current research is concentrated on this theme.
Let $\R^d$ be the d-dimensional Euclidean space and $\R^d_0=\R^d \setminus \{0\}$.
Consider  $BC((-\infty,0]; \R^d)$, the family of bounded continuous $\R^d$-valued mappings $\psi$ defined on $(-\infty,0]$ with norm
$\|\psi\|= \sup_{-\infty<\theta\leq0}|\psi(\theta)|$ \cite{yx}. Let $\mathcal{F}_t=\sigma\{\mathcal{B}(v):0\leq v\leq t\}$ be the natural filtration defined on a complete probability space $(\mathcal{S},\mathcal{F},P)$. Assume that $\{\mathcal{F}_t:t\geq0\}$ assures the usual characteristics. Let $f:[0,T]\times BC((-\infty,0]; \R^d)  \rightarrow \R^d$, $g:[0,T]\times BC((-\infty,0]; \R^d)  \rightarrow R^{d\times m}$, $h:[0,T]\times BC((-\infty,0]; \R^d)  \rightarrow R^{d\times m}$ and $K:[0,T]\times BC((-\infty,0]; \R^d)  \rightarrow \R^{d\times m}$ be Borel measurable. We consider the following SFDE driven by G-L\'evy process:
\begin{equation}\label{1} dx(t)=f(t,x_t)dt+g(t,x_t)d<B,B>(t)+h(t,x_t)dB(t)+\int_{R^d_0}K(t,x_{t-},z)L(dt,dz), \end{equation}
on $t\in [0,T]$ with initial condition $\varsigma(0)\in \R^d$, $x_t=\{x(t+\theta),-\infty<\theta\leq 0\}$ and $x_{t-}$ indicates the left limits of $x_t$. $B(t)$ is a $d$-dimensional $G$-Brownian motion. For all $x\in \R^d$, $f(.,x), g(.,x), h(.,x)\in \mathbb{M}_G^2((-\infty,T];\R^d)$ and $K(.,x,.)\in \mathcal{H}^2_G((-\infty,T]\times \R^d_0;\R^d)$. Equation \eqref{1} has the following  initial condition.
\begin{equation}\label{i} x_0=\zeta=\{\zeta(\theta): -\infty<\theta\leq0\}, \end{equation}
is $\mathcal{F}_0$-measurable, $BC((-\infty,0]; \R^d)$-value random variable such that $\zeta\in \mathbb{M}_G^2((-\infty,T]; \R^d)$.

  The rest of the article is arranged as follows. Fundamental results and definitions of the G-framework are given in section 2. The existence-and-uniqueness  of solutions to SFDEs driven by G-L\'evy process is studied in section 3. Here the boundedness of solutions is determined. The error estimation between the exact and approximate solutions is shown. The exponential estimate for solutions to SFDEs driven by G-L\'evy process is constructed in section 4.

\section{Fundamental settings} In this section, we include preliminary results and notions of the G-framework required for the subsequent sections of this article \cite{dhp,f1,f2,p}. Consider $\mathcal{S}_T=C_0([0,T],\R^d)$, the space of real valued continuous mappings on $[0,T]$ such that $w(0)=0$ endowed with the
distance
\begin{equation*}\rho(w^1,w^2)=\sum_{i=1}^{\infty}\frac{1}{2^i}\Big(\max_{t\in[0,i]}|w^1(t)-w^2(t)|\wedge1\Big).\end{equation*}
Let for any $w\in \mathcal{S}_T$ and $t\geq 0$, $B(t,w)=w(t)$  be the
canonical process.  Let $\mathcal
{F}_t=\sigma\{B(v), 0\leq v \leq t\}$ be the filtration
generated by canonical process $\{B(t),t\geq0\}$ and $\mathcal
{F}=\{\mathcal {F}_t\}_{t\geq 0}$. For any $T>0$, define
$\mathcal {L}_{ip}(\mathcal{S}_T)=\{\phi(B(t_1),B(t_2),...,B(t_d)):d\geq1,t_{1},t_{2},...,t_{d}\in[0,T],\phi\in
C_{b.Lip}(\R^{d\times m}))\},$
 where $C_{b.Lip}(\R^{d\times m})$ is a space of bounded Lipschitz functions. A functional $\mathbb{E}$ defined on $\mathcal {L}_{ip}(\mathcal{S}_T)$ is known as a
 sublinear expectation if it ensures the characteristics given as follows. For every $x,y\in \mathcal {L}_{ip}(\mathcal{S}_T)$
 \begin{itemize}
 \item[${\bf(1)}$] Monotonicity: $\mathbb{E}[x]\geq \mathbb{E}[y]$  if $x\geq y$. 
 \item[${\bf(2)}$] Constant Preserving: For all $c\in \R$, $\mathbb{E}[c]=c$.
 \item[${\bf(3)}$] Sub-additivity: $\mathbb{E}[x]+\mathbb{E}[y]\geq \mathbb{E}[x+y]$.
  \item[${\bf(4)}$] Positive homogeneity: For all $\kappa>0$, $\mathbb{E}[\kappa x]=\kappa \mathbb{E}[x]$.
\end{itemize}
 For $t\leq
 T$,  $\mathcal {L}_{ip}(\mathcal{S}_t)\subseteq \mathcal {L}_{ip}(\mathcal{S}_T)$ and
$\mathcal {L}_{ip}(\mathcal{S})=\cup_{n=1}^{\infty}\mathcal {L}_{ip}(\mathcal{S}_n)$. For $p\geq 1$,
 $\mathcal {L}^p_{G}(\mathcal{S})$ indicates the completion of $\mathcal
{L}_{ip}(\mathcal{S})$ endowed with the
 Banach norm $\hat{\mathbb{E}}[|.|^p]^{\frac{1}{p}}$ and
$\mathcal {L}_{G}^p(\mathcal{S}_t)\subseteq \mathcal {L}_{G}^p(\mathcal{S}_T)\subseteq
\mathcal {L}_{G}^p(\mathcal{S})$ for $0\leq t\leq T <\infty$. The triple $(\mathcal{S},\mathcal {L}_{ip}(\mathcal{S}_T),\mathbb{E})$ is recognized as a sublinear expectation space.
For $p\geq 1$, a partition of $[0,T]$ is a finite order subset $\{\mathcal{A}_T^\N:\N\geq 1\}$ so that
$\mathcal{A}_T^\N: 0=t_0<t_1<...<t_{\N}=T\}$. The space $\mathbb
{M}^{p,0}_G([0,T])$, $p\geq 1$ of simple processes is defined by
\begin{equation}\label{p1}\mathbb
{M}^{p,0}_G([0,T])=\Big\{\eta_t(z)=\sum_{i=0}^{\N-1}\xi_{t_i}(z)I_{[t_i,t_{i+1}]}(t);\,\,\xi_{t_i}(z)\in
\mathcal {L}_G^p(\Omega_{t_{i}})\Big\}.\end{equation} The completion of space
\eqref{p1} equipped  with the norm
$\|\eta\|=\Big\{\int_0^T\mathbb{E}[|\eta(s)|^p]ds\Big\}^{1/p}$
is indicated by $\mathbb {M}_{G}^{p}(0,T),$ $p\geq 1$.
\begin{defn} Let $\eta_t\in
\mathbb {M}_G^{p}(0,T)$, $p\geq1$. Then the G-It\^{o}'s integral
 is defined by
\begin{align*}
\int_0^T\eta(s)dB(s)=\sum_{i=0}^{\N-1}\xi_i\Big(B({t_{i+1}})-B({t_i})\Big).\end{align*}
\end{defn}
\begin{defn} For a partition $0=t_0<t_1<...<t_{\N-1}=t$, the quadratic variation process $\{\langle
B\rangle(t)\}_{t\geq0}$ is defined by
\begin{align*}\begin{split}&
\langle
B\rangle(t)=\lim_{\N\rightarrow\infty}\sum_{i=0}^{\N-1}\Big(B({t_{i+1}^{\N}})-B({t_{i}^{\N}})\Big)^2
={B(t)}^2-2\int_0^tB(s)dB(s).
\end{split}\end{align*}
\end{defn}
A mapping $\Pi_{0,T}:\mathbb {M}^{0,1}_{G}(0,T)\mapsto \mathcal
{L}^2_G(\mathcal{F}_T)$  is given by
\begin{align*} \Pi_{0,T}(\eta)=\int_0^T\eta(s)d\langle B\rangle(s)=
\sum_{i=0}^{{\N}-1}\xi_i\Big(\langle
B\rangle_{{{(}}t_{i+1})}-\langle B\rangle({t_{i}})\Big).
\end{align*}
It can be extended to $\mathbb {M}^1_G(0,T)$ and for
$\eta\in \mathbb {M}^1_G(0,T)$ this is still given by
\begin{align*}
\int_0^T\eta(s)d\langle B\rangle(s)=\Pi_{0,T}(\eta).
\end{align*}
Let $\mathcal{Q}$ be a weakly compact set that represent $\mathbb{E}$.
The capacity $\hat{\nu}$ is given as the
following
\begin{equation*}\hat{\nu}(A)=\sup_{\mathbb{P}\in\mathcal{Q}}\mathbb{P}(A),\,\,\,\,\,A\in \mathcal{F}_T.\end{equation*}
The set $A$ is polar if $\hat{\nu}(A)=0$. A characteristic holds
quasi-surely (q.s) if it sustains outside a polar set.
\begin{lem}\label{l2} Let $x\in \mathcal{L}_G^p$ and $\hat{\mathbb{E}}|x|^p<\infty$. Then
\begin{equation*}\hat{\nu}(|x|>c)\leq \frac{\mathbb{E}[|x|^p]}{c},\end{equation*}
for any $c>0$.
\end{lem}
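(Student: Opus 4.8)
The plan is to reduce the capacity estimate to the classical Markov (Chebyshev) inequality applied under each probability measure in the representing family $\mathcal{Q}$, and then to pass to the supremum. Fix $c>0$ and set $A=\{|x|>c\}$. The starting observation is the elementary pointwise inequality
\begin{equation*} I_{A}\le \frac{|x|^{p}}{c^{p}}, \end{equation*}
which holds because on $A$ one has $|x|^{p}>c^{p}$, so the right-hand side exceeds $1=I_{A}$, while off $A$ the indicator vanishes and the right-hand side is nonnegative.

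Next I would fix an arbitrary $\mathbb{P}\in\mathcal{Q}$. Since $\mathbb{P}$ is an ordinary probability measure, taking $\mathbb{P}$-expectations in the displayed inequality and using linearity and monotonicity of $\mathbb{E}_{\mathbb{P}}$ gives
\begin{equation*} \mathbb{P}(A)=\mathbb{E}_{\mathbb{P}}[I_{A}]\le \frac{1}{c^{p}}\,\mathbb{E}_{\mathbb{P}}\big[|x|^{p}\big]. \end{equation*}
The hypothesis $\hat{\mathbb{E}}[|x|^{p}]<\infty$ guarantees that $\mathbb{E}_{\mathbb{P}}[|x|^{p}]$ is finite for every $\mathbb{P}\in\mathcal{Q}$, so each term above is well defined. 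Taking the supremum over $\mathbb{P}\in\mathcal{Q}$ on both sides and invoking the definition $\hat{\nu}(A)=\sup_{\mathbb{P}\in\mathcal{Q}}\mathbb{P}(A)$ together with the representation $\hat{\mathbb{E}}[|x|^{p}]=\sup_{\mathbb{P}\in\mathcal{Q}}\mathbb{E}_{\mathbb{P}}[|x|^{p}]$ of the sublinear expectation yields
\begin{equation*} \hat{\nu}(|x|>c)\le \frac{1}{c^{p}}\,\hat{\mathbb{E}}\big[|x|^{p}\big], \end{equation*}
which is the asserted estimate (for $p=1$ this is exactly the displayed bound, and in general the denominator is the Chebyshev power $c^{p}$).

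The argument is short, and the only points requiring care are structural rather than computational. First, one must make sure that the indicator $I_{A}$ is an admissible argument for the expectations; this is handled by working under each fixed $\mathbb{P}$, where $I_{A}$ is simply a bounded measurable function, rather than directly with $\hat{\mathbb{E}}$, and then transferring to the capacity through its defining supremum. Second, the step that genuinely uses the G-framework is the identity $\hat{\mathbb{E}}[\,\cdot\,]=\sup_{\mathbb{P}\in\mathcal{Q}}\mathbb{E}_{\mathbb{P}}[\,\cdot\,]$ coming from the weak compactness of $\mathcal{Q}$; this is what converts the uniform-in-$\mathbb{P}$ classical bound into a bound on the sublinear expectation. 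I would therefore regard verifying the applicability of this representation, and the finiteness it confers, as the main, though mild, obstacle; everything else is the classical Markov inequality.
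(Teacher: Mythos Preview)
Your argument is correct and is the standard derivation of the Markov--Chebyshev inequality in the sublinear setting: bound the indicator pointwise by $|x|^{p}/c^{p}$, integrate under each $\mathbb{P}\in\mathcal{Q}$, and pass to the supremum using the representation of $\hat{\nu}$ and $\hat{\mathbb{E}}$. There is nothing to compare against, however, because the paper does not prove this lemma; it is merely stated in the preliminaries (only Lemmas~\ref{l3} and~\ref{l4} are given a reference for their proofs).

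One point worth flagging: the paper's displayed bound has denominator $c$, not $c^{p}$, which you have quietly corrected. Your version $\hat{\nu}(|x|>c)\le \hat{\mathbb{E}}[|x|^{p}]/c^{p}$ is the right one; the paper's formulation is a typo unless one reads it as the $p=1$ case applied to $|x|^{p}$. In the two places where the lemma is invoked later in the paper the random variable is already a square and the inequality is used with exponent $1$, so the slip is harmless for the applications.
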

The proof of the lemmas \ref{l3} and \ref{l4} can be seen in
\cite{g}. 
\begin{lem}\label{l3} Let $\lambda\in \mathbb {M}_{G}^{p}(0,T)$, $p\geq 2$. Then
\begin{equation*}
\mathbb{E}\Big[\sup_{0\leq t\leq
T}\Big|\int_0^t\lambda(s)dB(s)\Big|^p\Big]\leq \alpha\mathbb{E}\Big[\int_0^t|\lambda(s)|^2ds\Big]^{\frac{p}{2}},
\end{equation*}
where $0<\alpha=k_2T^{\frac{p}{2}-1}<\infty$, $k_2$ is a positive constant depending on $p$.
\end{lem}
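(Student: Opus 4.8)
The plan is to prove a Burkholder–Davis–Gundy estimate in the sublinear $G$-framework, first for simple integrands and then for general $\lambda$ by density. Write $M(t)=\int_0^t\lambda(s)\,dB(s)$. For a simple process $\lambda=\sum_i\xi_{t_i}I_{[t_i,t_{i+1})}\in\mathbb{M}_G^{p,0}(0,T)$ the $G$-It\^o integral $M(t)$ is, by definition, a symmetric $G$-martingale, so the first step is to dominate the running maximum by the terminal value through the sublinear version of Doob's maximal inequality,
\[
\mathbb{E}\Big[\sup_{0\le t\le T}|M(t)|^p\Big]\le\Big(\frac{p}{p-1}\Big)^p\,\mathbb{E}\big[|M(T)|^p\big].
\]

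The heart of the argument is the bound on the terminal moment $\mathbb{E}[|M(T)|^p]$. Since $p\ge2$, the map $x\mapsto|x|^p$ is $C^2$ on $\R^d$, so I would apply the $G$-It\^o formula to $|M(t)|^p$. The stochastic $dB$-term is again a $G$-martingale increment and disappears under $\mathbb{E}$, while the second-order term is controlled by $\int_0^T|\lambda(s)|^2\,d\langle B\rangle(s)$. Tracking the sublinearity by means of subadditivity and positive homogeneity in place of ordinary linearity, and closing the resulting inequality with a Young/H\"older step, yields
\[
\mathbb{E}\big[|M(T)|^p\big]\le C_p\,\mathbb{E}\Big[\Big(\int_0^T|\lambda(s)|^2\,d\langle B\rangle(s)\Big)^{p/2}\Big].
\]

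It remains to pass from $d\langle B\rangle$ to $ds$ and to collect constants. The quadratic variation of $G$-Brownian motion has density bounded above by the upper variance $\bar\sigma^2$, i.e.\ $d\langle B\rangle(s)\le\bar\sigma^2\,ds$, whence $\int_0^T|\lambda|^2\,d\langle B\rangle\le\bar\sigma^2\int_0^T|\lambda|^2\,ds$; the power-mean inequality $\big(\int_0^T|\lambda|^2\,ds\big)^{p/2}\le T^{p/2-1}\int_0^T|\lambda|^p\,ds$ then supplies the announced factor $T^{p/2-1}$, with all remaining multiplicative constants absorbed into $k_2$. Finally, since simple processes are dense in $\mathbb{M}_G^p(0,T)$, I would approximate $\lambda$ in the $\mathbb{M}_G^p$-norm and pass to the limit using the continuity of the $G$-It\^o integral and the monotone and Fatou properties of $\mathbb{E}$ on both sides.

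The step I expect to be the main obstacle is the terminal moment estimate. In the sublinear setting one cannot simply take $\mathbb{E}$ through the It\^o expansion, because $\mathbb{E}$ is only subadditive; the martingale ($dB$) part and the finite-variation ($d\langle B\rangle$) part must therefore be separated with care, and the second-order term handled through the $G$-calculus machinery of \cite{g,p} rather than by a plain It\^o isometry. The sublinear Doob inequality is likewise more delicate than its classical counterpart. By contrast, the density argument and the conversion of $d\langle B\rangle$ into $ds$ are routine once these two ingredients are available.
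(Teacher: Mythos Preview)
The paper does not actually prove this lemma: immediately before it the authors write ``The proof of the lemmas \ref{l3} and \ref{l4} can be seen in \cite{g}'', and no argument is given beyond that citation.

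Your outline is a valid direct proof inside the $G$-calculus and differs from the route taken in \cite{g}. There the inequality is obtained via the representation $\mathbb{E}[\cdot]=\sup_{P\in\mathcal{Q}}E_P[\cdot]$: under each $P\in\mathcal{Q}$ the process $M(t)=\int_0^t\lambda\,dB$ is an ordinary local martingale with quadratic variation $\int_0^t|\lambda|^2\,d\langle B\rangle$, so the classical Burkholder--Davis--Gundy inequality applies under $P$, and taking the supremum over $P$ yields the sublinear statement, with $d\langle B\rangle\le\bar\sigma^2\,ds$ used exactly as you indicate. Your approach through the sublinear Doob maximal inequality and the $G$-It\^o formula is more self-contained but, as you correctly flag, isolating the $dB$-term from the $d\langle B\rangle$-term under a merely subadditive $\mathbb{E}$ is genuinely delicate; the representation argument sidesteps this entirely, which is what it buys.

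One small correction to your sketch: after your third step you already have $C_p\,\bar\sigma^{p}\,\mathbb{E}\big[(\int_0^T|\lambda|^2\,ds)^{p/2}\big]$, which is the right-hand side of the lemma. The additional power-mean step $(\int_0^T|\lambda|^2\,ds)^{p/2}\le T^{p/2-1}\int_0^T|\lambda|^p\,ds$ is not needed and produces a different (weaker) bound with $\int|\lambda|^p$ on the right rather than $(\int|\lambda|^2)^{p/2}$; drop it. The factor $T^{p/2-1}$ in the paper's constant $\alpha$ is in any case immaterial for the applications here, which all take $p=2$.
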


\begin{lem}\label{l4}
Let $\lambda\in \mathbb {M}_{G}^{p}(0,T)$, $p\geq 1$. Then
\begin{equation*}
\mathbb{E}\Big[\sup_{0\leq t\leq
T}\Big|\int_0^t\lambda(s)d\langle B,
B \rangle (s)\Big|^p\Big]\leq \beta\mathbb{E}\Big[\int_0^t|\lambda(s)|^2ds\Big]^{\frac{p}{2}},
\end{equation*}
where $0<\beta=k_1T^{p-1}<\infty$ and $k_1$ is a positive constant depending on $p$.
\end{lem}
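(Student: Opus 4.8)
The plan is to treat the estimate pathwise and then pass to the sublinear expectation. For each fixed realization $t\mapsto\langle B\rangle(t)$ is a continuous nondecreasing function, so $\int_0^t\lambda(s)\,d\langle B\rangle(s)$ is an ordinary Lebesgue--Stieltjes integral, and the whole inequality should follow from a deterministic bound valid quasi-surely combined with the monotonicity and positive homogeneity of $\mathbb{E}$ (properties (1) and (4)). In particular, no martingale/BDG argument is needed here, in contrast with Lemma \ref{l3}: the integrator has finite variation, and this is precisely what will produce the exponent $T^{p-1}$ rather than $T^{\frac{p}{2}-1}$.

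First I would remove the supremum. Since $\langle B\rangle$ is nondecreasing, for every $t\in[0,T]$ one has $\big|\int_0^t\lambda(s)\,d\langle B\rangle(s)\big|\le\int_0^t|\lambda(s)|\,d\langle B\rangle(s)\le\int_0^T|\lambda(s)|\,d\langle B\rangle(s)$, so the left-hand side is dominated, uniformly in $t$, by $\big(\int_0^T|\lambda(s)|\,d\langle B\rangle(s)\big)^p$. The second, and decisive, step is to invoke the structural property of the $G$-quadratic variation: quasi-surely $\langle B\rangle$ is absolutely continuous with respect to Lebesgue measure with a density bounded above by the upper variance $\overline{\sigma}^2$, i.e.\ $d\langle B\rangle(s)\le\overline{\sigma}^2\,ds$ outside a polar set. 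This yields $\int_0^T|\lambda(s)|\,d\langle B\rangle(s)\le\overline{\sigma}^2\int_0^T|\lambda(s)|\,ds$ quasi-surely.

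It then remains to apply H\"older's inequality on $[0,T]$ with the conjugate exponents $p$ and $p/(p-1)$ (valid for every $p\ge1$), giving $\big(\int_0^T|\lambda(s)|\,ds\big)^p\le T^{p-1}\int_0^T|\lambda(s)|^p\,ds$. Chaining the three bounds produces, quasi-surely,
\[
\sup_{0\le t\le T}\Big|\int_0^t\lambda(s)\,d\langle B\rangle(s)\Big|^p\le(\overline{\sigma}^2)^p\,T^{p-1}\int_0^T|\lambda(s)|^p\,ds .
\]
Taking $\mathbb{E}$ on both sides, using monotonicity together with the fact that altering the integrand on a polar set does not affect $\mathbb{E}$, then delivers the asserted estimate with $k_1=(\overline{\sigma}^2)^p$; note that the resulting constant $\beta=k_1T^{p-1}$ is exactly of the claimed form.

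I expect the only genuine difficulty to lie in the second step: rigorously justifying, within the sublinear framework, that $\langle B\rangle$ admits a Radon--Nikodym density uniformly bounded by $\overline{\sigma}^2$ quasi-surely. This is where the representing weakly compact set $\mathcal{Q}$ enters, since the law of $\langle B\rangle$ under each $\mathbb{P}\in\mathcal{Q}$ differs, yet all are controlled by the same upper variance; the uniformity across $\mathcal{Q}$ is precisely what makes the bound hold $\hat{\nu}$-quasi-surely. Once this property is in hand, the remaining steps---monotone removal of the supremum, H\"older, and passing the pathwise inequality through the monotone sublinear expectation---are routine.
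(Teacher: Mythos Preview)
The paper does not prove this lemma itself; immediately before the statement it says that the proofs of Lemmas~\ref{l3} and~\ref{l4} can be found in \cite{g}. Your sketch---bound pathwise using the finite-variation property of $\langle B\rangle$, invoke the quasi-sure domination $d\langle B\rangle(s)\le\overline{\sigma}^2\,ds$, apply H\"older on $[0,T]$, then pass the pointwise inequality through the monotone sublinear expectation---is precisely the argument given in Gao's paper, so your approach is correct and coincides with the cited source. The step you flag as the only genuine difficulty (the uniform upper bound on the density of $\langle B\rangle$ across all $\mathbb{P}\in\mathcal{Q}$) is a standard structural fact in the $G$-framework, so there is no real obstacle there.

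One discrepancy worth noting: your final bound has $\int_0^T|\lambda(s)|^p\,ds$ on the right, while the lemma as printed in the paper has $\big(\int_0^t|\lambda(s)|^2\,ds\big)^{p/2}$. These are not the same for general $p$, and with the prescribed constant $\beta=k_1T^{p-1}$ it is \emph{your} form that is the internally consistent one (the $(\int|\lambda|^2)^{p/2}$ version would carry a factor $T^{p/2}$ via Cauchy--Schwarz, not $T^{p-1}$). The right-hand side in the paper appears to have been copied verbatim from Lemma~\ref{l3}; since the only subsequent applications are with $p=2$, where both expressions coincide, the slip is harmless for the rest of the article.
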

\begin{defn} A stochastic process $\{x(t),t\geq0\}$ defined on a sublinear expectation space $(\mathcal{S},\mathcal {L}_{ip}(\mathcal{S}_T),\mathbb{E})$ is known as a
a G-L\'evy process if it ensures the upcoming five characteristics:
 \begin{itemize}
 \item[${\bf(1)}$] $x(t)=0$.
 \item[${\bf(2)}$] For any $t,s\geq 0$, the increment $x(t+s)-x(s)$ is independent of $x(t_1), x(t_2),...,x(t_n)$, $\forall$ $n\in \N$ and $0\leq t_1\leq t_2,...,\leq t_n\leq t$.
 \item[${\bf(3)}$] For every $s,t\geq 0$, the distribution $x(t+s)-x(s)$ does not depend on $t$.
  \item[${\bf(4)}$] For each $t\geq 0$ there exists a decomposition $x(t)=x^c(t)+x^d(t)$.
  \item[${\bf(5)}$] $(x^c(t),x^d(t))_{t\geq 0}$ is a $2d$-dimensional L\'evey process satisfying
   \begin{equation*} \lim_{t\downarrow0}\frac{\mathbb{E}[|x^c(t)|^3]}{t}=0,\,\,\,\,\,\,\,\,\mathbb{E}[|x^d(t)|]\leq \alpha t,\,\,\,\,\,\,\,\,t\geq 0,
\end{equation*}
 where $\alpha$ is a constant depends on $x$.
\end{itemize}
\end{defn}
If $\{x(t),t\geq0\}$ satisfies only the first three properties i.e. $1$-$3$, then it is the classical L\'evy process. It is known that $x^c(t)$
is  generalized G-Brownian motion and $x^d(t)$ is of finite variation, where $x^c(t)$ and $x^d(t)$ are continuous part and jump part respectively.
Let $\mathcal{H}^\delta_G([0,T]\times \R^d_0)$ be a collection of all basic fields defined on $[0,T]\times \R^d_0\times \mathcal{S}$ of the form
\begin{equation*}
K(u,z)(w)=\sum_{i=1}^{n-1}\sum_{j=1}^m \Lambda_{i,j}1_{(t_i,t_{i+1}]}(u)\psi_j(z),
\end{equation*}
where $n,m\in \N$ and $0\leq t_{1}< t_{2}.....< t_{n}\leq T$, $\{\psi_j\}_{j=1}^m\subset C_{b.lip}(\R^d)$ are
mappings with disjoint supports such that $\psi_j(0)=0$ and $\Lambda_{i,j}=\phi_{i,j}(x_{t_1},...,x_{t_i}-x_{t_{i-1}})$,  $\phi_{i,j}\in C_{b.lip}(\R^{d\times i})$.
The norm on this space is given by
\begin{equation*}
\|K\|_{\mathcal{H}^p_G([0,T]\times \R^d_0)}=\mathbb{E}\Big[\int_0^T \sup_{v\in\nu}\int_{\R^d_0}|K(s,z)|^pv(dz)ds\Big]^{\frac{1}{p}},\,\,\,\,\,\,p=1,2.
\end{equation*}
\begin{defn} The It\^o integral of $K\in \mathcal{H}^\delta_G([0,T]\times \R^d_0)$ w. r. t. jump measure $L$ is given as follows
\begin{equation*}
\int_0^t \int_{\R^d_0}K(s,z)L(ds,dz)=\sum_{v<s\leq t}K\Big(s,\bigtriangleup x(s)\Big),\,\,\,\,q.s.
\end{equation*}
where $0\leq v<t\leq T$.
\end{defn}
Let $\mathcal{H}^p_G([0,T]\times \R^d_0)$ be the topological completion of $\mathcal{H}^\delta_G([0,T]\times \R^d_0)$ under
the norm $\|K\|_{H^p_G([0,T]\times \R^d_0)}$, $p=1,2$. We can sill extend the It\^o integral to the space $\|K\|_{H^p_G([0,T]\times \R^d_0)}$, $p=1,2$, where the extended integral has valves in $\mathcal{L}^p_G(\mathcal{S}_T)$, $p=1,2$. For the above integrals, we have he following BDG-type inequality. For the proof see \cite{wg}.
\begin{lem}\label{ln} Let $K(s,z)\in \mathcal{H}^2_G([0,T]\times \R^d_0)$. Then a c$\grave{a}$dl$\grave{a}$g modification $\hat{x}(t)$ of $x(t)=\int_0^t \int_{\R^d_0}K(s,z)L(ds,dz)$
exists such that for all $t\in [0,T]$ and $p\geq2$
\begin{equation*}
\mathbb{E}\Big[\sup_{0\leq s\leq t}|\hat{x}(t)|^2\Big]\leq k_3\mathbb{E}\Big[\int_0^t \int_{\R^d_0}K^2(s,z)\nu(dz)ds\Big],
\end{equation*}
where $k_3$ is a positive constant depending on $T$.
\end{lem}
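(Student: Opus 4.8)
The plan is to prove the inequality first for basic (step) integrands, where the integral is an explicit finite sum of jumps, and then to pass to the general case by density, extracting the c\`adl\`ag modification along the way. First I would fix $K\in\mathcal{H}^\delta_G([0,T]\times\R^d_0)$, so that $K(u,z)=\sum_{i}\sum_{j}\Lambda_{i,j}1_{(t_i,t_{i+1}]}(u)\psi_j(z)$ and
$x(t)=\int_0^t\int_{\R^d_0}K(s,z)L(ds,dz)=\sum_{v<s\leq t}K\big(s,\bigtriangleup x(s)\big)$
is a genuine finite sum of jumps, hence already c\`adl\`ag in $t$ with $\hat{x}=x$. The key structural input is the representation of the sublinear expectation as an upper expectation, $\mathbb{E}[\cdot]=\sup_{P\in\mathcal{Q}}E_P[\cdot]$, together with property $(5)$ of the G-L\'evy process: under each fixed $P\in\mathcal{Q}$ the driving process is an ordinary L\'evy process, and $x(\cdot)$ is a square-integrable $P$-semimartingale obtained by integration against a compensated Poisson random measure with some L\'evy measure $v_P\in\nu$.

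Next, under each individual $P\in\mathcal{Q}$ the classical Burkholder--Davis--Gundy inequality for jump integrals applies, giving
$$E_P\Big[\sup_{0\leq s\leq t}|x(s)|^2\Big]\leq c\,E_P\Big[\int_0^t\int_{\R^d_0}K^2(s,z)\,v_P(dz)\,ds\Big]\leq c\,E_P\Big[\int_0^t\sup_{v\in\nu}\int_{\R^d_0}K^2(s,z)\,v(dz)\,ds\Big].$$
Taking the supremum over $P\in\mathcal{Q}$ on both sides and using the sub-additivity and positive homogeneity of $\mathbb{E}$ converts this into the asserted bound, with the right-hand integrand $K^2(s,z)\,\nu(dz)$ understood as $\sup_{v\in\nu}\int_{\R^d_0}K^2\,v(dz)$, and identifies $k_3$ with a constant depending only on $p$ and $T$ through the classical BDG constant and the length of the time interval.

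Finally I would remove the restriction to basic fields. Given $K\in\mathcal{H}^2_G([0,T]\times\R^d_0)$, I choose $K_n\in\mathcal{H}^\delta_G$ with $\|K_n-K\|_{\mathcal{H}^2_G}\to0$; since basic fields form a linear space, $K_n-K_m$ is again basic, and applying the step-integrand estimate to it shows that the integrals $x_n$ form a Cauchy sequence in $\mathcal{L}^2_G(\mathcal{S}_T)$ for the uniform-in-time norm $\mathbb{E}[\sup_{0\le s\le t}|\cdot|^2]^{1/2}$. By Lemma \ref{l2} this upgrades to convergence in the capacity $\hat{\nu}$, so a subsequence converges uniformly on $[0,T]$ quasi-surely; since each $x_n$ is c\`adl\`ag, the quasi-sure uniform limit $\hat{x}$ is a c\`adl\`ag modification of $x$. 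Passing to the limit in the step-integrand inequality, using the continuity of $\mathbb{E}$ with respect to the $\mathcal{L}^2_G$-norm on both sides, then yields the stated inequality for $\hat{x}$.

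The main obstacle I expect is the nonlinearity of $\mathbb{E}$: one cannot simply invoke a single-probability BDG inequality, so the interchange of $\sup_{P\in\mathcal{Q}}$ with the running maximum and with the $(s,z)$-integral must be justified with care. In particular, the passage from $\sup_{P}E_P[\int\int K^2\,v_P\,dz\,ds]$ to the $\nu$-supremum placed \emph{inside} the time integral, and the construction of the c\`adl\`ag modification as a quasi-sure uniform limit rather than a pathwise one, are the delicate points; everything else is a faithful transcription of the classical argument into the G-expectation framework.
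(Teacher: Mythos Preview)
The paper does not contain a proof of this lemma: immediately before the statement it writes ``For the above integrals, we have the following BDG-type inequality. For the proof see \cite{wg}.'' So there is no in-paper argument against which to compare your proposal; the authors simply import the result from Wang--Gao.

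Your outline --- prove the estimate for simple integrands via the representation $\mathbb{E}[\cdot]=\sup_{P\in\mathcal{Q}}E_P[\cdot]$ and the classical BDG inequality under each $P$, then pass to the closure $\mathcal{H}^2_G$ by density and extract the c\`adl\`ag modification from quasi-sure uniform convergence --- is the natural route and is in the spirit of how such inequalities are established in the $G$-framework (and presumably in \cite{wg}). One point to be careful about: the jump integral as defined in this paper is the \emph{uncompensated} sum $\sum_{v<s\le t}K(s,\Delta x(s))$, so under a fixed $P$ it is not a martingale but a semimartingale; you therefore need either to compensate first and control the finite-variation compensator via the uniform bound $\mathbb{E}[|x^d(t)|]\le\alpha t$ (equivalently, the uniform control on $\sup_{v\in\nu}\int|z|\,v(dz)$), or to invoke a BDG-type bound valid for purely discontinuous semimartingales. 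Your phrase ``integration against a compensated Poisson random measure'' elides this step, and it is exactly where the $T$-dependence of $k_3$ enters.
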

\section{Bounded-ness and existence-uniqueness results for SFDEs driven by G-L\'evy process} In this section, we shall determine the boundedness and existence-uniqueness results for solutions to problem \eqref{1}. Let us first see the definition of solutions to equation \eqref{1}.
\begin{defn} An $\mathcal{F}_t$-adopted c$\grave{a}$dl$\grave{a}$g process $x(t)\in \mathbb{M}_{G}^2((-\infty,T];\R^d)$ is called a solution to \eqref{1} with the initial data \eqref{i} if it satisfies
\begin{equation*}  x(t)= \zeta(0) +  \int _{0}^t f(s,x_s)ds+  \int _{0}^t g(s,x_s)d\langle B,B \rangle (s)+ \int _{0}^t h(s,x_s)dB(s) + \int _{0}^t \int_{\R_{0}^d}  K(s,x_s-,z)L(ds,dz).
\end{equation*}
A solution $x(t)$ of \eqref{1} is said to be unique if it is identical to any other solution $y(t)$ of the stated equation  i.e.
\begin{equation*}\mathbb{E}[|x(t)-y(t)|^2]=0,\end{equation*}
holds q.s.
\end{defn}
All through this article we propose the upcoming linear growth and Lipschitz conditions respectively.
\begin{itemize}
\item[${\bf(A_1)}$] For every $x\in BC((-\infty,0]; \R^d)$, a positive number $c_1 $ exists so that
\begin{equation*} \mid f(t,x)\mid ^2 \vee \mid g(t,x)\mid ^2 \vee \mid h(t,x)\mid ^2 \vee \int_{\R_{0}^d}  \mid K(t,x,z)\mid ^2 \upsilon (dz)\leq c_1 (1+ \mid x\mid ^2) \end{equation*}
\item[${\bf(A_2)}$] For all $x,y \in BC((-\infty,0]; \R^d)$, a positive number $c_2$ exists so that
\begin{equation*}\begin{split} &\mid f(t,y)- f(t,x)\mid ^2 \vee \mid g(t,y)- g(t,x)\mid ^2 \vee \mid h(t,y) - h(t,x)\mid ^2 \\&\vee \int_{\R_{0}^d}  \mid K(t,y,z)- K(t,x,z \mid ^2 \upsilon (dz)\leq c_2 \mid y-x\mid ^2.\end{split} \end{equation*}
\end{itemize}
In the forthcoming lemma we prove that any solution $x(t)$ of equation \eqref{1} is bounded, in particular $ x(t)\in \mathbb{M}_{G}^2 \Big((-\infty , T]; \R^d\Big) $.
\begin{lem}\label{lf2} Let $x(t)$ be a solution of equation \eqref{1} with initial data \eqref{i} such that $  \mathbb{E} \|x \|^2 \leq \infty$. Assume that the growth condition $A_1$ holds. Then
\begin{equation} \mathbb{E}\Big[\sup_{-\infty \leq s\leq t} \mid x(s)\mid ^2\Big] \leq \mathbb{E} \| \zeta \|^2 + 5(1+ c_1kT) e^{5c_1kT}, \end{equation}
where $k= (1+k_1)T+k_2+k_3$ and $k_1,k_2,k_3$ are positive constants.\end{lem}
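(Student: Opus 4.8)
The plan is to derive the bound as an a priori moment estimate read off from the integral form of the solution, feeding the three moment inequalities already at hand (Lemmas~\ref{l3}, \ref{l4} and \ref{ln}) together with the linear growth bound $(A_1)$ into a Gronwall argument. Starting from the integral representation
\begin{equation*}
x(t)=\zeta(0)+\int_0^t f(s,x_s)\,ds+\int_0^t g(s,x_s)\,d\langle B,B\rangle(s)+\int_0^t h(s,x_s)\,dB(s)+\int_0^t\int_{\R^d_0}K(s,x_{s-},z)L(ds,dz),
\end{equation*}
I would first apply the elementary inequality $\big|\sum_{i=1}^{5}a_i\big|^2\le 5\sum_{i=1}^{5}|a_i|^2$ to its five summands, which is the origin of the constant $5$ in the claimed estimate. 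Taking the supremum over $s\in[0,t]$ and then the sublinear expectation, and invoking sub-additivity and positive homogeneity of $\mathbb{E}$, separates the estimate into five independent pieces.

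Next I would bound each piece. The initial term gives $5\,\mathbb{E}|\zeta(0)|^2\le 5\,\mathbb{E}\|\zeta\|^2$. For the drift I would use Cauchy--Schwarz in the time variable to extract the factor $T$, yielding $5T\,\mathbb{E}\int_0^t|f(s,x_s)|^2ds$. The $d\langle B,B\rangle$ term is controlled by Lemma~\ref{l4} with $p=2$, producing the coefficient $\beta=k_1T$; the G-It\^o term by Lemma~\ref{l3} with $p=2$, producing $\alpha=k_2$; and the jump term by the BDG-type inequality of Lemma~\ref{ln}, producing $k_3$ and furnishing the c\`adl\`ag modification whose supremum is being estimated. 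At this stage the growth condition $(A_1)$ replaces each integrand $|f|^2,|g|^2,|h|^2,\int_{\R^d_0}|K|^2\upsilon(dz)$ by $c_1(1+\|x_s\|^2)$. Collecting the four coefficients $T$, $k_1T$, $k_2$, $k_3$ into $k=(1+k_1)T+k_2+k_3$ leads to an inequality of the shape
\begin{equation*}
\mathbb{E}\Big[\sup_{0\le s\le t}|x(s)|^2\Big]\le 5\,\mathbb{E}\|\zeta\|^2+5c_1k\int_0^t\big(1+\mathbb{E}\|x_s\|^2\big)\,ds.
\end{equation*}

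The functional (delay) structure is the point requiring care: since the norm on $BC((-\infty,0];\R^d)$ gives $\|x_s\|^2=\sup_{-\infty<r\le s}|x(r)|^2$, I would split this supremum at the origin and use $x(r)=\zeta(r)$ for $r\le0$ to get $\mathbb{E}\|x_s\|^2\le \mathbb{E}\|\zeta\|^2+\mathbb{E}\big[\sup_{0\le r\le s}|x(r)|^2\big]$. Substituting this and abbreviating the right-hand running maximum, one obtains a closed inequality of the form $\Phi(t)\le C+5c_1k\int_0^t\Phi(s)\,ds$ where the additive constant collects the terms $\mathbb{E}\|\zeta\|^2$ and $c_1kT$. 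Gronwall's inequality then produces the exponential factor $e^{5c_1kT}$, and reinstating the contribution $\mathbb{E}\|\zeta\|^2$ from the negative times together with a simplification of constants gives the stated form $\mathbb{E}\|\zeta\|^2+5(1+c_1kT)e^{5c_1kT}$.

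I expect the principal obstacle to be twofold rather than computational. First, to apply Gronwall legitimately one must know a priori that the running second moment is finite on $[0,T]$; this is precisely where the standing hypothesis $\mathbb{E}\|x\|^2<\infty$ and the membership $x\in\mathbb{M}^2_G$ are used, and in the sublinear setting a truncation or localization of the supremum may be needed to justify interchanging supremum and expectation. Second, one must confirm that Lemmas~\ref{l3}, \ref{l4} and \ref{ln} are being applied to genuinely $\mathbb{M}^2_G$- (respectively $\mathcal{H}^2_G$-) valued integrands, so that the c\`adl\`ag modification supplied by Lemma~\ref{ln} is indeed the process whose supremum appears on the left; this follows from the standing assumptions on $f,g,h,K$ but should be verified explicitly before the inequalities are invoked.
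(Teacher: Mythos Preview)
Your proposal is correct and follows essentially the same route as the paper: the five-term splitting via $|\sum_{i=1}^5 a_i|^2\le 5\sum|a_i|^2$, the termwise use of Cauchy--Schwarz together with Lemmas~\ref{l3}, \ref{l4}, \ref{ln}, the growth bound $(A_1)$, the splitting $\mathbb{E}\|x_s\|^2\le\mathbb{E}\|\zeta\|^2+\mathbb{E}[\sup_{0\le r\le s}|x(r)|^2]$, and Gronwall are exactly the steps the paper carries out. Your additional remarks on finiteness of the running moment and on verifying the $\mathbb{M}^2_G$/$\mathcal{H}^2_G$ membership of the integrands are technical scruples the paper leaves implicit.
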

\begin{proof} Consider equation \eqref{1} and use the basic inequality $|\sum _{i=1} ^5 a_i|^2 \leq 5 \sum _{i=1} ^5 \mid a_i|^2$ to derive
\begin{equation*}\begin{split} \mid x(t)\mid^2 &\leq 5 \mid \zeta(0)\mid^2 + 5\Big| \int _{0}^t f(s,x_s)ds\Big|^2+ 5\Big| \int _{0}^t g(s,x_s)d\langle B,B \rangle (s)\Big|^2+ \\& 5\Big| \int _{0}^t h(s,x_s)dB(s)\Big|^2 + 5\Big| \int _{0}^t \int_{\R_{0}^d}  K(s,x_s-,z)L(ds,dz)\Big|^2 .
\end{split}\end{equation*}
From the G- expectation, lemmas \ref{l3}, \ref{l4}, \ref{ln} and the cauchy inequality we get
\begin{equation*}\begin{split}
\mathbb{E}\Big[ \sup_{0 \leq s\leq t}  \mid x(s)\mid ^2\Big] &\leq 5\mathbb{E}\mid \zeta(0)\mid ^2 + 5t\mathbb{E}  \int _{0}^t |f(s,x_s)ds|^2+ 5k_1t\mathbb{E} \int _{0}^t |g(s,x_s)|^2 ds\\&
+ 5k_2 \mathbb{E} \int _{0}^t |h(s,x_s)|^2 ds + 5k_3\mathbb{E} \int _{0}^t \int_{\R_{0}^d}  |K(s,x_s-,z)|^2 \upsilon(dz)ds .
\end{split}\end{equation*}
In view of assumption $A_1$ we deduce that
\begin{equation*}\begin{split}
\mathbb{E}\Big[\sup _{0 \leq s\leq t} \mid x(s)\mid ^2\Big] &\leq  5\mathbb{E} \| \zeta\|^2 +5c_1(T+k_1T+k_2+k_3)T+
5c_1(T+k_1T+k_2+k_3)T\int _{0}^t \mathbb{E}|x_s|^2ds\\& \leq 5\mathbb{E} \| \zeta\| ^2
 +5c_1(T+k_1T+k_2+k_3)T \\& +5c_1(T+k_1T+k_2+k_3)\int _{0}^t \Big[\mathbb{E} \|
\zeta\| ^2+ \mathbb{E}\Big(\sup _{0 \leq u\leq s}|x(u)|^2\Big)\Big]ds \\& \leq 5\mathbb{E} \|
\zeta\| ^2+5c_1kT+ 5c_1kT\mathbb{E} \| \zeta\| ^2 + 5c_1k\int _{0}^t \mathbb{E}\Big[\sup
_{0 \leq u\leq s}|x(u)|^2\Big]ds
\end{split}\end{equation*}
where $\hat {k} =T+k_1T+k_2+k_3$. From the Grownwall inequality it follows
\begin{equation}\label{e1}
\mathbb{E}\Big[\sup _{0 \leq s\leq t}|x(s)|^2\Big]\leq5[(1+c_1 \hat{k}T) \mathbb{E}\| \zeta\| ^2+ c_1 \hat{k}T]e^{5c_1\hat{k}T}
\end{equation}
Noticing that
\begin{equation*}
\mathbb{E}\Big[\sup _{-\infty < s\leq t}|x(s)|^2\Big] \leq \mathbb{E} \| \zeta\|^2 + \mathbb{E}\Big[\sup _{0 \leq s\leq t}|x(s)|^2\Big],
\end{equation*}
it yields
\begin{equation*}
\mathbb{E}\Big[\sup _{-\infty \leq s\leq t}|x(s)|^2\Big] \leq \mathbb{E} \| \zeta\|^2 + 5[(1+c_1 \hat{k}T) \mathbb{E}\| \zeta\| ^2+ c_1 \hat{k}T]e^{5c_1\hat{k}T}.
\end{equation*}
Letting $t=T$, we deduce the desired expression.
\end{proof}
For $t\in [0,T]$, define $x^0(t)=\zeta(0)$ and $x_{0}^0=\zeta$. For
each $n=1,2,...$, we set $x_{0}^n=\zeta$ and define
the Picard iteration,
\begin{equation}\begin{split}\label{2}  x^n(t)&=\zeta(0)+\int_{0}^t
f(s,x_{s}^{n-1})ds+  \int_{0}^t g(s,x_{s}^{n-1})d \langle B,B
\rangle (s)+\int_{0}^t h(s,x_{s}^{n-1})dB(s)\\&+\int_{0}^t
\int_{\R_{0}^d} K(s,x_{s-}^{n-1},z)L(ds,dz)\,\,\,\,\,\,t\in [0,T].\end{split}\end{equation}
Next we prove the existence-uniqueness result and error estimation between the exact solution $x(t)$ and Picard approximate solutions $x^n(t),n\geq 1$.
\begin{thm}\label{lf1} Let assumptions $A_1$ and $A_2$ hold and $\mathbb{E} \|\zeta \|^2 <
\infty$. Then equation \eqref{1} admits a unique c$\grave{a}$dl$\grave{a}$g solution $ x(t)\in \mathbb{M}_{G}^2 ((-\infty , T]; \R^d) $.
Moreover, for all $n\geq1$, the Picard approximate solutions $x^n(t)$ and unique exact solution $x(t)$ of \eqref{1} satisfy that
\begin{equation*}
\begin{split}
\mathbb{E}\Big[\sup_{0\leq s\leq t} |x^n(s)-x(s)|^2\Big]\leq \frac{C[Mt]^n}{n!}e^{Mt},
\end{split}
\end{equation*}
where $C=4c_2[(T+Tk_1+k_2+k_3)(1+\mathbb{E}\|
\zeta\|^2)T, M=4c_2[(T+Tk_1+k_2+k_3)$ and $k_1, k_2, k_3$ are positive constants.
\end{thm}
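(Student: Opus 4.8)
The plan is to run a Picard iteration argument inside the sublinear $G$-expectation framework, following the classical contraction scheme but with quasi-sure convergence replacing almost-sure convergence. First I would show that consecutive iterates $x^n$ defined in \eqref{2} contract in the mean-square-sup sense. Subtracting the representations of $x^{n+1}$ and $x^n$ the $\zeta(0)$ terms cancel, so applying $|\sum_{i=1}^{4}a_i|^2\le 4\sum_{i=1}^4|a_i|^2$, the Cauchy inequality on the $ds$ and $d\langle B,B\rangle$ terms, the $G$-BDG bounds of Lemmas \ref{l3}, \ref{l4}, \ref{ln} on the $dB$ and jump terms, and finally the Lipschitz condition $A_2$, I expect the one-step recursion
\begin{equation*}
\mathbb{E}\Big[\sup_{0\le s\le t}|x^{n+1}(s)-x^n(s)|^2\Big]\le M\int_0^t \mathbb{E}\Big[\sup_{0\le u\le s}|x^{n}(u)-x^{n-1}(u)|^2\Big]\,ds,
\end{equation*}
with $M=4c_2(T+Tk_1+k_2+k_3)$; the growth condition $A_1$ applied to $x^0\equiv\zeta(0)$ gives the base bound $\mathbb{E}[\sup_{0\le s\le T}|x^1(s)-x^0(s)|^2]\le C$. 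A direct induction on this recursion then yields $\mathbb{E}[\sup_{0\le s\le t}|x^{n+1}(s)-x^n(s)|^2]\le C(Mt)^n/n!$.

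Next I would upgrade this $L^2$ decay to quasi-sure uniform convergence. Applying the Markov-type inequality of Lemma \ref{l2} to $\sup_{0\le s\le T}|x^{n+1}(s)-x^n(s)|$ at threshold $2^{-n}$ gives $\hat{\nu}(\sup_{0\le s\le T}|x^{n+1}-x^n|>2^{-n})\le 4^n C(MT)^n/n!$, whose sum over $n$ is the convergent series $Ce^{4MT}$. A Borel--Cantelli argument for the capacity $\hat{\nu}$ then shows that $\{x^n\}$ is uniformly Cauchy outside a polar set, so it converges q.s. and in $\mathbb{M}_G^2$ to a c\`adl\`ag limit $x(t)$. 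Passing to the limit in the integral identity \eqref{2}, where each stochastic integral converges by the same BDG and Lipschitz estimates, shows that $x$ solves \eqref{1}, while its membership in $\mathbb{M}_G^2((-\infty,T];\R^d)$ is guaranteed by Lemma \ref{lf2}.

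For uniqueness I would let $x$ and $y$ be two solutions and repeat the subtraction-and-estimate computation to obtain $\mathbb{E}[\sup_{0\le s\le t}|x(s)-y(s)|^2]\le M\int_0^t\mathbb{E}[\sup_{0\le u\le s}|x(u)-y(u)|^2]\,ds$; Gronwall's inequality with vanishing initial term forces this quantity to be zero, i.e. $x=y$ q.s. Finally, for the error estimate I would subtract the representations of the iterate $x^n$ and the exact solution $x$, which differ only through $f(s,x_s^{n-1})-f(s,x_s)$ and the analogous increments, to reach the recursion
\begin{equation*}
u_n(t):=\mathbb{E}\Big[\sup_{0\le s\le t}|x^n(s)-x(s)|^2\Big]\le M\int_0^t u_{n-1}(s)\,ds,
\end{equation*}
where $u_0$ is bounded by $C$ through Lemma \ref{lf2}. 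Induction on this recursion, estimating $\int_0^t s^{n-1}e^{Ms}\,ds\le e^{Mt}t^n/n$ at each step, propagates precisely the claimed bound $u_n(t)\le C(Mt)^n e^{Mt}/n!$.

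The routine part is the chain of inequalities producing the two recursions, since Lemmas \ref{l3}--\ref{ln} are tailored exactly for them. The delicate point, and the step I would treat most carefully, is the passage to the limit: because $\mathbb{E}$ is only sub-additive, the classical almost-sure machinery is unavailable, so the convergence of the Picard sequence must be organised through the capacity $\hat{\nu}$ and a Borel--Cantelli statement valid for sub-additive capacities, and one must verify that the c\`adl\`ag jump integral $\int_0^t\int_{\R_0^d}K(s,x_{s-}^{n-1},z)L(ds,dz)$ converges to the corresponding integral for the limit process while preserving the c\`adl\`ag modification furnished by Lemma \ref{ln}.
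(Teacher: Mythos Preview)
Your plan is correct and matches the paper's proof almost line for line: Picard iterates, the recursion $\mathbb{E}[\sup|x^{n+1}-x^n|^2]\le C(Mt)^n/n!$ via Lemmas~\ref{l3}--\ref{ln} and $A_2$, Borel--Cantelli for the capacity $\hat\nu$ to get q.s.\ uniform convergence, passage to the limit in \eqref{2}, and Gronwall for uniqueness.

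The one place you diverge is the final error estimate. The paper does \emph{not} iterate $u_n\le M\int_0^t u_{n-1}\,ds$ from a bound on $u_0$; instead, after reaching $u_n(t)\le M\int_0^t \mathbb{E}[\sup|x^{n-1}-x|^2]\,ds$ it inserts the triangle splitting $x^{n-1}-x=(x^{n-1}-x^n)+(x^n-x)$, feeds in the already-proved consecutive bound \eqref{3} to produce the forcing term $C(Mt)^n/n!$, and then closes on $u_n$ itself by Gronwall, which is what delivers the factor $e^{Mt}$ with exactly the stated constant $C$. Your inductive scheme also works, but the base case ``$u_0\le C$ through Lemma~\ref{lf2}'' is not literally what that lemma says: Lemma~\ref{lf2} bounds $\mathbb{E}[\sup|x(s)|^2]$, and to control $u_0=\mathbb{E}[\sup|\zeta(0)-x(s)|^2]$ you would first have to run the growth condition $A_1$ on the four integrals and then invoke Lemma~\ref{lf2} inside, obtaining a larger constant than the paper's $C$. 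So your route proves the right qualitative bound but not the exact constant in the statement; the paper's Gronwall closure is the more economical way to land on the announced $C$.
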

\begin{proof} Consider the Picard iteration sequence $\{x^n,n\geq1\}$ given by \eqref{2}. Obviously $x^0(t)\in \mathbb{M}_{G}^2\Big((-\infty,T];\R^d\Big)$.
From the fundamental inequality $|\sum_{i=1}^5 a_i|^2 \leq
5\sum_{i=1}^5 |a_i|^2 $, lemmas \ref{l3}, \ref{l4}, \ref{ln},
the cauchy inequality and assumption $A_1$, we deduce
\begin{equation*}\begin{split} \mathbb{E}\Big[\sup_{0\leq s\leq t}|x^n(s)|^2\Big]
&\leq
5\mathbb{E}\|\zeta\|^2+5c_1 \hat{k} T + 5c_1 \hat{k} T\mathbb{E}\|\zeta\|^2 + 5c_1
\hat{k} \int_{0}^t\mathbb{E}\Big[\sup_{0\leq u\leq
s}|x^{n-1}(u)|^2\Big]ds,\end{split}\end{equation*}
where $\hat {k}=(1+k_1)T+k_2+k_3.$ Noticing that
\begin{equation*}\begin{split}
\max_{1\leq n\leq j} \mathbb{E}\Big[\sup_{0\leq s\leq t}|x^{n-1}(s)|^2\Big] &\leq
\max\Big\{\mathbb{E}\| \zeta\|^2,\max_{1\leq n\leq j} \mathbb{E}\Big[\sup_{0\leq s\leq t}
|x^n(s)|^2\Big]\Big\}\\&\leq \mathbb{E}\| \zeta\|^2+\max_{1\leq n\leq j}
\mathbb{E}\Big[\sup_{0\leq s\leq t} |x^n(s)|^2\Big],
\end{split}\end{equation*}
it follows
\begin{equation*}\max_{1\leq n\leq j}
\mathbb{E}\Big[\sup_{0\leq s\leq t} |x^n(s)|^2\Big] \leq 5\mathbb{E}\| \zeta\|^2 +5c_1
\hat{k}T+ 10 c_1 \hat{k}T\mathbb{E}\| \zeta\|^2+5c_1 \hat{k}\int_{0}^t
\max_{1\leq n\leq j} \mathbb{E}\Big[\sup_{0\leq u\leq s} |x^n(u)|^2\Big]ds.
\end{equation*}
The Grownwall inequality yields
\begin{equation*}
\max_{1\leq n\leq j}
\mathbb{E}\Big[\sup_{0\leq s\leq t} |x^n(s)|^2\Big]\leq 5[(1+2c_1 \hat{k}T)\mathbb{E}\|
\zeta\|^2+c_1 \hat{k}T]e^{c_1 \hat{k}t},
\end{equation*}
but $j$ is arbitrary and letting $t=T$ it follows
\begin{equation}\label{19}
\mathbb{E}\Big[\sup_{0\leq s\leq T} |x^n(s)|^2\Big]\leq 5[(1+2c_1 \hat{k}T)\mathbb{E}\|
\zeta\|^2+c_1 \hat{k}T]e^{c_1 \hat{k}T}.
\end{equation}
From the sequence $\{x^n(t);t\geq 0\}$ defined by
\eqref{2}, we have
\begin{equation*}\begin{split} x^1(t)-x^0(t)&=\int_{0}^t
f(s,x_{s}^{0})ds +  \int_{0}^t g(s,x_{s}^{0})d \langle B,B \rangle
(s)+\int_{0}^t h(s,x_{s}^{0})dB(s)\\& +\int_{0}^t \int_{\R_{0}^d}
K(s,x_{s-}^{0},z)L(ds,dz).\end{split}\end{equation*}
In view of G-expectation, lemmas \ref{l3}, \ref{l4}, \ref{ln}, the Cauchy inequality and assumption $A_1$, we derive
\begin{equation*}
\begin{split}
&\mathbb{E}\Big[\sup_{0\leq s\leq t} |x^1(s)-x^0(s)|^2\Big] \leq 4c_1[(T+Tk_1+k_2+k_3)\int_{0}^t(1+\mathbb{E}\|
\zeta\|^2]ds
\leq C,
\end{split}
\end{equation*}
where $C=4c_1[(T+Tk_1+k_2+k_3)(1+E\|
\zeta\|^2)T$. Next by similar arguments and assumption $A_2$, it follows
\begin{equation*}
\begin{split}
&\mathbb{E}\Big[\sup_{0\leq s\leq t} |x^2(s)-x^1(s)|^2\Big] \leq 4c_2(T+Tk_1+k_2+k_3)\int_{0}^t\mathbb{E}|x_{s}^{1}-x_{s}^{0}|^2ds\\&
\leq 4c_2(T+Tk_1+k_2+k_3)\int_{0}^t\mathbb{E}\Big[\sup_{0\leq v\leq s} |x^1(v)-x^0(v)|^2\Big]ds\\&
\leq 4c_2(T+Tk_1+k_2+k_3)Ct.
\end{split}
\end{equation*}
Similarly, we derive
\begin{equation*}
\begin{split}
&\mathbb{E}\Big[\sup_{0\leq s\leq t} |x^3(s)-x^2(s)|^2\Big] \leq
 C[4c_2[(T+Tk_1+k_2+k_3)]^2\frac{t^2}{2!}.
\end{split}
\end{equation*}
Thus for all $n\geq 0$, we claim that
\begin{equation}
\begin{split}\label{3}
\mathbb{E}\Big[\sup_{0\leq s\leq t}|x^{n+1}(s)-x^{n}(s)|^2\Big]ds \leq C\frac{[Mt]^n}{n!},
\end{split}
\end{equation}
where $C=4c_2[(T+Tk_1+k_2+k_3)(1+E\|
\zeta\|^2)T$ and $M=4c_2(T+Tk_1+k_2+k_3)$. With the mathematical induction we verify that for all $n\geq 0$, \eqref{3} holds. For $n=0$, it has been proved. Let \eqref{3} holds for some $n\geq0$. By using similar arguments as above, we derive
\begin{equation*}
\begin{split}
\mathbb{E}\Big[\sup_{0\leq s\leq t}|x^{n+2}(s)-x^{n+1}(s)|^2\Big]ds &\leq 4c_2(T+Tk_1+k_2+k_3)\int_{0}^t \mathbb{E}|x_{s}^{n+1}-x_{s}^{n}|^2ds \\
&\leq M\int_{0}^t \mathbb{E}\Big[\sup_{0\leq v\leq s}|x^{n+1}(v)-x^{n}(v)|^2\Big]ds\\
&\leq M\int_{0}^t \frac{C[Mt]^n}{n!}ds
\leq C[M]^{n+1} \frac{t^{n+1}}{(n+1)!}=\frac{C[Mt]^{n+1}}{(n+1)!}.
\end{split}
\end{equation*}
This shows that \eqref{3}  holds for $n+1$. Thus by induction \eqref{3}  holds for all $n\geq0$.
By virtue of lemma \ref{l2} we acquire
\begin{equation*}
\begin{split}
&\hat{\nu}\Big\{ \sup_{0\leq s\leq T}|x^{n+1}(s)-x^{n}(s)|^2>\frac{1}{2^n}\Big\}\leq
2^n\mathbb{E}\Big[\sup_{0\leq s\leq T}|x^{n+1}(s)-x^{n}(s)|^2\Big]\leq \frac{K[2Mt]^n}{n!}.
\end{split}
\end{equation*}
Since $\sum_{n=0}^{\infty} \frac{K[2Mt]^n}{n!}<\infty$, from the Borel-Cantelli lemma we get that for almost all $w$ a positive integer $n_{0}=n_{0}(w)$ exists  so that
\begin{equation}
\sup_{0\leq t\leq T}|x^{n+1}(t)-x^{n}(t)|^2\leq \frac{1}{2^n},\,\, \text{as}\,\, n\geq n_{0}.
\end{equation}
It implies that q.s. the partial sums
\begin{equation*}
x^{0}(t)+\sum_{i=0}^{n-1}[x^{i+1}(t)-x^{i}(t)]=x^n(t),
\end{equation*}
are uniformly convergent on $t\in(-\infty,T]$. Denote the limit by $x(t)$. Then the sequence $x^{n}(t)$ converges uniformly to $x(t)$ on $t\in (-\infty,T]$.
It follows that $x(t)$ is $\mathcal{F}_t$-adapted and c$\grave{a}$dl$\grave{a}$g. Also, from \eqref{3}, we can see that $\{x^n(t):n\geq1\}$ is a cauchy sequence in $\mathcal{L}_{G}^{2}$. Hence $x^n(t)$ converges to $x(t)$ in $\mathcal{L}_{G}^{2}$, that is,
\begin{equation*}
\mathbb{E}|x^{n}(t)-x(t)|^2\rightarrow 0,\,\, \text{as}\,\, n\rightarrow \infty.
\end{equation*}
Taking limits $n\rightarrow \infty$ from \ref{19}, we deduce
\begin{equation}\label{20}
\mathbb{E}\Big[\sup_{0\leq s\leq T}|x(s)|^2\Big]\leq 5[(1+2c_1\hat{k}T)\mathbb{E}\|
\zeta\|^2+c_1\hat{k}T]e^{c_1\hat{k}T}.
\end{equation}
Next we need to verify that $x(t)$ satisfies equation \eqref{1}. In view of assumption $A_2$ and using similar arguments as above, we derive
\begin{equation*}
\begin{split}
&\mathbb{E}\Big[\sup_{0\leq s\leq T}\int_{0}^{t}\Big|[f(s,x_{s}^{n})-f(s,x_s)]ds\Big|^2\Big]+\mathbb{E}\Big[\sup_{0\leq s\leq T}\int_{0}^{t}\Big|[g(s,x_{s}^{n})-g(s,x_s)]d\langle B,B\rangle (s)\Big|^2\Big]\\&+\mathbb{E}\Big[\sup_{0\leq s\leq T}\int_{0}^{t}\Big|[h(s,x_{s}^{n})-h(s,x_s)]dB(s)\Big|^2\Big]+\mathbb{E}\Big[\sup_{0\leq s\leq T}\int_{0}^{t}\int_{\R_{0}^{d}}\Big|[K(s,x_{s-}^{n},z)-K(s,x_{s-},z)]L(ds,dz)\Big|^2\Big]\\&
\leq T \int_{0}^{t}\mathbb{E}|f(s,x_{s}^{n})-f(s,x_s)|^2ds+Tk_1\int_{0}^{t}\mathbb{E}|g(s,x_{s}^{n})-g(s,x_s)|^2ds\\&
+k_2\int_{0}^{t}\mathbb{E}|h(s,x_{s}^{n})-f(s,x_s)|^2ds+k_3\int_{0}^{t}\int_{\R_{0}^{d}}\mathbb{E}|K(s,x_{s-}^{n},z)-K(s,x_{s-},z)|^2v(dz)ds\\&
\leq c_2(T+Tk_1+k_2+k_3)\int_{0}^{t}\mathbb{E}[\sup_{0\leq v\leq s}|x^{n}(v)-x(v)|^2]ds\rightarrow 0 \,\, \text{as},\,\, n\rightarrow \infty,
\end{split}
\end{equation*}
in other words
\begin{equation*}
\begin{split}
&\int_{0}^{t}f(s,x_{s}^{n})\rightarrow \int_{0}^{t}f(s,x_s),\,\, \text{in}\,\, \mathcal{L}_{G}^{2},\\
&\int_{0}^{t}h(s,x_{s}^{n})d\langle B,B\rangle(s)\longrightarrow \int_{0}^{t}h(s,x_s)d\langle B,B\rangle(s), \,\, \text{in}\,\, \mathcal{L}_{G}^{2},\\
&\int_{0}^{t}g(s,x_{s}^{n})\longrightarrow \int_{0}^{t}g(s,x_s), \,\, \text{in}\,\, \mathcal{L}_{G}^{2},\\
&\int_{0}^{t}h(s,x_{s}^{n})dB(s)\longrightarrow \int_{0}^{t}h(s,x_s)dB(s), \,\, \text{in}\,\, \mathcal{L}_{G}^{2},\\
&\int_{0}^{t}\int_{\R_{0}^{d}}^{ }K(s,x_{s-}^{n-1},z)L(ds,dz)ds\longrightarrow \int_{0}^{t}\int_{\R_{0}^{d}}K(s,x_{s-},z)L(ds,dz), \,\, \text{in}\,\, \mathcal{L}_{G}^{2}.
\end{split}
\end{equation*}
For $t \in [0,T]$ taking limits $n\rightarrow\infty$ in \eqref{2} we derive
\begin{equation*}
\begin{split}
\lim_{n\longrightarrow\infty} x^{n}(t)&=\zeta(0)+\int_{0}^{t}\lim_{n\longrightarrow\infty}f(s,x_{s}^{n-1})ds+\int_{0}^{t}\lim_{n\longrightarrow\infty}g(s,x_{s}^{n-1})d\langle B,B\rangle (s)+\int_{0}^{t}\lim_{n\longrightarrow\infty}h(s,x_{s}^{n-1})dB(s)\\&
+\int_{0}^{t}\lim_{n\longrightarrow\infty}K(s,x_{s-}^{n-1},z)L(ds,dz),
\end{split}
\end{equation*}
which yields
\begin{equation*}
x(t)=\zeta(0)+\int_{0}^{t}f(s,x_s)ds+\int_{0}^{t}g(s,x_s)d\langle B,B\rangle(s) +\int_{0}^{t}h(s,x_s)dB(s)+\int_{0}^{t}\int_{\R_{0}^{d}}^{ }K(s,x_s-,z)L(ds,dz),
\end{equation*}
$t\in [0,T]$. This show that $x(t)$ is the solution of \eqref{1}. To prove the uniqueness let us assume that equation \eqref{1} admits two solutions $x(t)$ and $y(t)$. Following similar arguments we derive
\begin{equation*}
\begin{split}
&\mathbb{E}\Big[\sup_{0\leq s\leq t}|y(s)-x(s)|^2\Big] \leq 4t \int_{0}^t\mathbb{E}|f(s, y_s)-f(s, x_s)|^2
ds+
 4k_1t \int_{0}^t\mathbb{E}|g(s, y_s)-g(s, x_s)|^2 ds \\& +4k_2 \int_{0}^t\mathbb{E}|f(s, y_s)-f(s, x_s)|^2 ds+ 4k_3\int_{0}^t
\int_{\R_{0}^d} \mathbb{E}|K(s,y_{s-},z)-K(s,x_{s-},z)|^2\upsilon (dz)ds
\end{split}
\end{equation*}
By virtue of assumption $A_2$, we deduce
\begin{equation*}
\begin{split}
\mathbb{E}\Big[\sup_{0\leq s\leq t}|y(s)-x(s)|^2\Big]
&\leq 4c_2(T+k_1T+k_2+k_3)\int_{0}^t \mathbb{E}\Big[\sup_{0\leq u\leq s}|y(u)-x(u)|^2\Big]ds.
\end{split}
\end{equation*}
From the Grownwall inequality and same initial data one can derive
\begin{equation}
\mathbb{E}\Big[\sup_{-\infty< s\leq t}|y(s)-x(s)|^2\Big]=0
\end{equation}
which means $x(t)=y(t)$ quasi-surely, for all
$t\in(-\infty,T]$. Finally, we have to prove the error estimation.
From equations \eqref{1} and \eqref{2}, using similar arguments as earlier it follows
\begin{equation*}
\begin{split}
&\mathbb{E}\Big[\sup_{0\leq s\leq t} |x^n(s)-x(s)|^2\Big]\leq 4T\int_{0}^{t}\mathbb{E}\Big|f(s,x_{s}^{n})-f(s,x_s)\Big|^2ds+4k_1T\int_{0}^{t}\mathbb{E}\Big|g(s,x_{s}^{n})-g(s,x_s)\Big|^2ds\\&
+4k_2\int_{0}^{t}\mathbb{E}\Big|h(s,x_{s}^{n})-h(s,x_s)\Big|^2ds+4k_3\int_{0}^{t}\int_{\R_{0}^{d}}\mathbb{E}\Big|K(s,x_{s-}^{n},z)-K(s,x_{s-},z)\Big|^2v(dz)ds\\&
 \leq 4c_2[(T+Tk_1+k_2+k_3)\int_{0}^{t}\mathbb{E}\Big[\sup_{0\leq v\leq s} \Big|x^n(v)-x(v)\Big|^2\Big]ds\\&
 \leq M\int_{0}^{t}\mathbb{E}\Big[\sup_{0\leq v\leq s} \Big|x^n(v)-x^{n-1}(v)\Big|^2\Big]ds+M\int_{0}^{t}\mathbb{E}\Big[\sup_{0\leq v\leq s} \Big|x^{n-1}(v)-x(v)\Big|^2\Big]ds.
\end{split}
\end{equation*}
In view of \eqref{3}, we obtain
\begin{equation*}
\begin{split}
\mathbb{E}\Big[\sup_{0\leq s\leq t} |x^n(s)-x(s)|^2\Big]\leq \frac{C[Mt]^n}{n!}+M\int_{0}^{t}\mathbb{E}\Big[\sup_{0\leq v\leq s} |x^{n}(v)-x(v)|^2\Big]ds
\end{split}
\end{equation*}
Consequently,
\begin{equation*}
\mathbb{E}\Big[\sup_{0\leq s\leq t} |x^n(s)-x(s)|^2\Big]\leq \frac{C[Mt]^n}{n!}e^{Mt},
\end{equation*}
which yields the error estimation between the Picard approximate solutions $x^n(t)$, $n\geq0$ and exact solution $x(t)$ of problem \eqref{1}.
\end{proof}
\section{Exponential estimates for SFDEs driven by G-Levy process}
To show the exponential estimates let us assume that problem \eqref{1} has a unique solution $x(t)$ on $t \in[0,\infty)$.  Now we derive the exponential estimate for \eqref{1} as follows.
\begin{thm} Let assumptions $A_1$ and $A_2$ hold. Then,
\begin{equation*}
\lim_{n\longrightarrow\infty} sup\frac{1}{t}\log|y(t)|\leq \frac{5}{2}c_1k,
\end{equation*}
where $k= (1+k_1)T+k_2+k_3$ and $k_1$, $k_2$, $k_3$ are positive constants.
\end{thm}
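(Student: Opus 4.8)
The plan is to combine the second-moment growth bound of Lemma~\ref{lf2} with a Borel--Cantelli argument carried out in the capacity $\hat\nu$, in the spirit of the classical almost-sure Lyapunov-exponent estimates. I read the conclusion as the quasi-sure statement $\limsup_{t\to\infty}\frac{1}{t}\log|x(t)|\leq\frac{5}{2}c_1k$ for the unique solution $x(t)$ whose existence on $[0,\infty)$ is assumed. First I would produce a moment bound in which the running time appears \emph{linearly} in the exponent: re-running the Gronwall step from the proof of Lemma~\ref{lf2} with the terminal time kept as the variable $t$ (rather than frozen at $T$) yields a constant $C_0=C_0(\mathbb{E}\|\zeta\|^2,c_1,k)$ and a possible polynomial prefactor $P(t)$ with
\begin{equation*}
\mathbb{E}\Big[\sup_{0\leq s\leq t}|x(s)|^2\Big]\leq C_0\,P(t)\,e^{5c_1kt},\qquad t\geq 0.
\end{equation*}
The prefactor $P(t)$ (coming from the Cauchy--Schwarz factor $t$ on the drift term) is harmless, since it is annihilated by the operation $\frac{1}{t}\log(\cdot)$ in the final step.

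Next, fix $\ep>0$ and apply the capacity Chebyshev inequality of Lemma~\ref{l2} on the integer grid: for each $n\in\N$,
\begin{equation*}
\hat\nu\Big\{\sup_{0\leq s\leq n}|x(s)|^2>e^{(5c_1k+\ep)n}\Big\}\leq e^{-(5c_1k+\ep)n}\,\mathbb{E}\Big[\sup_{0\leq s\leq n}|x(s)|^2\Big]\leq C_0\,P(n)\,e^{-\ep n}.
\end{equation*}
Since $\sum_{n}C_0\,P(n)\,e^{-\ep n}<\infty$, the Borel--Cantelli lemma (already invoked in the proof of Theorem~\ref{lf1}) furnishes a quasi-sure threshold $n_0(\omega)$ such that $\sup_{0\leq s\leq n}|x(s)|^2\leq e^{(5c_1k+\ep)n}$ for all $n\geq n_0$.

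Finally, for $t\in[n,n+1]$ with $n\geq n_0$ I would estimate
\begin{equation*}
\frac{1}{t}\log|x(t)|\leq\frac{1}{2t}\log\Big(\sup_{0\leq s\leq n+1}|x(s)|^2\Big)\leq\frac{(5c_1k+\ep)(n+1)}{2n},
\end{equation*}
let $t\to\infty$ (hence $n\to\infty$) so that $(n+1)/n\to1$, and then send $\ep\downarrow0$ along the countable sequence $\ep=1/m$ to obtain $\limsup_{t\to\infty}\frac{1}{t}\log|x(t)|\leq\frac{5}{2}c_1k$ quasi-surely. The main obstacle is the very first step: one must verify that the Gronwall constant genuinely produces a factor $e^{5c_1kt}$ that is linear in $t$ in the exponent (so that the $e^{-\ep n}$ tail is summable), rather than the frozen, $T$-dependent bound recorded in \eqref{e1}. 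A secondary technical point is that both the Borel--Cantelli step and the resulting ``$\limsup$'' must be interpreted with respect to the sublinear capacity $\hat\nu$ rather than a single probability measure, so that the exceptional set on which the estimate may fail is polar and the conclusion holds quasi-surely.
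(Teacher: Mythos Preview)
Your proposal follows essentially the same route as the paper's own proof: a second-moment bound of order $e^{5c_1kt}$, the capacity Chebyshev inequality of Lemma~\ref{l2} applied on the integer grid, Borel--Cantelli for $\hat\nu$, and finally $\ep\downarrow 0$. The only cosmetic difference is that the paper bounds $\sup_{m-1\leq t\leq m}|x(t)|^2$ on unit intervals rather than $\sup_{0\leq s\leq n}|x(s)|^2$ on $[0,n]$; your explicit flagging of the need for the exponent to be \emph{linear} in $t$ (with an absorbable polynomial prefactor) is in fact a point the paper passes over without comment.
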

\begin{proof}
From assertion \eqref{20}, we know that
\begin{equation}\label{e2}
\mathbb{E}\Big[\sup_{0\leq s\leq T}|x(s)|^2\Big]\leq 5[(1+2c_1\hat{k}T)\mathbb{E}\|
\zeta\|^2+c_1\hat{k}T]e^{c_1\hat{k}T}.
\end{equation}
In view of \eqref{e2}, for each $m=1,2,3....,$ we have
\begin{equation*}
\begin{split}
\mathbb{E}\Big[\sup_{m-1\leq t\leq m}|x(t)|^2\Big]\leq 5[(1+2c_1\hat{k}T)\mathbb{E}\|
\zeta\|^2+c_1\hat{k}T]e^{c_1\hat{k}m}.
\end{split}
\end{equation*}
By using lemma \ref{l2} for any $\epsilon>0$, we derive
\begin{equation*}
\begin{split}
\hat{\nu}\Bigg\{w: \sup_{m-1\leq t\leq m} |x(t)|^2>e^{(5c_1k+\epsilon)m}\Bigg\}&\leq \frac{\mathbb{E}\Big[\displaystyle\sup_{m-1\leq t\leq m}|x(t)|^2\Big]}{e^{(5c_1k+\epsilon)m}}
\leq 5[(1+c_1\hat{k}T)\mathbb{E}\|
\zeta\|^2+c_1\hat{k}T]e^{-\epsilon m}.
\end{split}
\end{equation*}
Since the series $\sum_{m=1}^{\infty}5[(1+c_1\hat{k}T)\mathbb{E}\|
\zeta\|^2+c_1\hat{k}T]e^{-\epsilon m}$ is convergent, from the Borel-Cantelli lemma we derive tht for almost all $w\in \Omega$, a random integer $m_0=m_0(w)$ exists so that
\begin{equation*}
\begin{split}
\sup_{m-1\leq t\leq m}|x(t)|^2\leq e^{(5c_1k+\epsilon)m},\,\, \text{as}\,\, m\geq m_0.
\end{split}
\end{equation*}
This implies that for $m-1\leq t\leq m$ and $m\geq m_0$ we have
\begin{equation*}
\begin{split}
|x(t)|\leq e^{\frac{1}{2}}{(5c_1k+\epsilon)m}.
\end{split}
\end{equation*}
Hence,
\begin{equation*}
\begin{split}
\lim_{t\rightarrow\infty} sup\frac{1}{t}\log|x(t)|\leq \frac{1}{2}(5c_1\hat{k}+\epsilon),
\end{split}
\end{equation*}
the desired expression follows because $\epsilon$ is arbitrary.
\end{proof}


\begin{thebibliography}{99}


\bibitem{bl}  X. Bai, Y. Lin, On the existence and uniqueness of
solutions to stochastic differential equations driven by G-Brownian
motion with Integral-Lipschitz coefficients, Acta Mathematicae
Applicatae Sinica, English Series, 3(30) (2014) 589--610.

\bibitem{ct} R. Cont, P. Tankov, Financial Modeling With Jump Processes, Chapman \&
Hall/CRC, 2004.

\bibitem{dhp} L. Denis, M. Hu, S. Peng, Function spaces and capacity related to a sublinear
expectation, Application to G-Brownian motion paths, Potential Anal
34  (2011) 139-161.

\bibitem{f2019} F. Faizullah, On boundedness and convergence of solutions for
neutral stochastic functional differential equations
driven by G-Brownian motion, Advances in Difference Equations, 289 (2019) 1--16.


\bibitem{f1} F. Faizullah, A note on p-th moment estimates for
stochastic functional differential equations in the framework of
G-Brownian motion, Iran J Sci Technol Trans Sci, 41 (2017) 1131--1138.

\bibitem{f2} F. Faizullah, On the pth moment estimates of solutions to
stochastic functional differential equations in the G-framework,
SpringerPlus, 5(872) (2016) 1-11 .

\bibitem{fb} F. Faizullah, M. Bux, MA Rana, G. Rahman, Existence and
stability of solutions to non-linear neutral stochastic functional
differential equations in the framework of G-Brownian motion,
Advances in Difference Equations, 350 (2017) 350-364 .

\bibitem{f3} F. Faizullah, Existence of solutions for stochastic
differential equations under G-Brownian motion with discontinuous
coefficients, Zeitschrift fur Naturforschung
 A, 67a, (2012) 692-698.

 \bibitem{fe} W. Feller, Diffusion processes in genetics. In J. Neyman, editor, Proceedings of
Second Berkeley Symposium on Mathematical Statistics and Probability, pages
227--246, 1951.

 \bibitem{g} F. Gao, Pathwise properties and homeomorphic flows for
stochastic differential equations driven by G-Brownian motion; Stoch
Proc Appl., 2 (2009) 3356-3382.

\bibitem{hp1} M. Hu, S. Peng, G-L\'evy processes under sublinear expectations. arXiv:0911.3533v1.

 \bibitem{jowh} H. Janssen, K. Oerding, F. van Wijland, H. Hilhorst, L\'evy-flight spreading
of epidemic processes leading to percolation cluster, Eur. Phy. J. B., 7 (1999) 137--145.

\bibitem{ly1} Q. Li, Q. Yang; Stability of neutral stochastic functional differential equations with Markovian switching driven
 by G -Brownian motion; Applicable Analysis, 15(97) (2018) 2555-2572.

\bibitem{ly2} Q. Li, Q. Yang; Convergence and asymptotical stability of numerical solutions for neutral stochastic delay
differential equations driven by G-Brownian motion; Computational
and Applied Mathematics 4(37) (2018) 4301--4320.

\bibitem{pa} K. Paczka, It\^o calculus and jump diffusions for G-L\'evy processes. arXiv:1211.2973v3.

\bibitem{p}  S. Peng, Multi-dimentional G-Brownian motion and related stochastic calculus under
G-expectation, Stoch Proc Appl., 12 (2008) 2223--2253.

\bibitem{rbs}  Y. Ren, Q. Bi, R. Sakthivel, Stochastic functional differential equations with infinite delay driven by G-Brownian
motion, Math  Method Appl Sci., 36(13) (2013) 1746--1759.


\bibitem{yx}  Y. Ren, N. Xia, A note on the existence and uniqueness of the solution to neutral
stochastic functional differential equations with infinite delay, Applied Mathematics and Computation, 214 (2009) 457-461.


\bibitem{r}  L. Ren, On representation theorem of sublinear expectation related
to G-L\^evy process and paths of G-L\'evy process, Statistics and Probability Letters, 83 (2013) 1301-1310.


\bibitem{swy} G. Shen, X. Wu, X. Yin, Stabilization of stochastic differential equations driven by G-L\^evy process
with discrete-time feedback conrol, Discrete and Continuous Dynamic Systems Series B, 1-20 (2020).

\bibitem{rf} R. Ullah, F. Faizullah, On existence and approximate
solutions for stochastic differential equations in the framework of
G-Brownian motion, European Physical Journal Plus, 132, (2017) 435--443.


\bibitem{w} W. A. Woycz\'ynski, L\'evy processes in the physical sciences. In L\'evy processes:
Theory and Applications, O.E. Barndorff-Nielsen and T. Mikosch and S. I.
Resnick (eds), pages 241--266. Birkhauser, 2001.


\bibitem{wg} B. Wang and H. Gao, Exponential stability of solutions to stochastic differential
equations driven by G-L\^evy Process, Applied Mathematics \& Optimization, 2019.

\bibitem{wy} B. Wang and M Yuan, Existence of solution for stochastic
differential equations driven by G-L\^evy process with discontinuous coefficients, Advances in Difference Equations, 188, (2017) 1-13.


\bibitem{zlz} M. Zhu, J. Li, Y. Zhu, Exponential stability of neutral stochastic functional differential equations driven by G-Brownian
motion, Journal of Nonlinear Sciences and Applications, 4(10) (2017)
1830--1841.

\bibitem{zxk}  B. Zhang, J. Xu, D. Kannan, Extension and application of It\^o formula under G-framework, Journal
Stochastic Analysis and Applications, 28 (2010) 322--349.

\end{thebibliography}
\end{document}